\newcommand{\R}{\mathbb R}%
\newcommand{\C}{\mathbb C}%
\newcommand{\I}{\mathbb I}
\newtheorem{defn}{Definition}[section]
\newtheorem{thm}[defn]{Theorem}
\newtheorem{lem}[defn]{Lemma}
\newtheorem{prop}[defn]{Proposition}
\newtheorem{cor}[defn]{Corollary}
\newtheorem{rem}[defn]{Remark}
\newcommand{\be}{\begin{equation}}
\newcommand{\ee}{\end{equation}}
\begin{document}
\title[Measure on the space of Lipschitz isometric maps]{A Measure on the space of Lipschitz isometric maps of a compact 1-manifold in $\R^2$}
\author[A. Dasgupta]{Amites Dasgupta}
\address{Statistics and Mathematics Unit, Indian Statistical Institute\\ 203,
B.T. Road, Calcutta 700108, India.\\ e-mail:amites@isical.ac.in}
\author[M. Datta]{Mahuya Datta}
\address{Statistics and Mathematics Unit, Indian Statistical Institute\\ 203,
B.T. Road, Calcutta 700108, India.\\ e-mail:
mahuya@isical.ac.in}
\keywords{measure on function space, Isometric maps, random walk.}
\thanks{2000 Mathematics Subject Classification: 58D20, 58J99, 60B05, 28C20}
\begin{abstract} Let $M$ be a compact 1-manifold. Given a continuous function $g:M\to \R_+$ we consider the following ordinary differential equation: \begin{center}$\|\dot{f}(t)\|=g(t)$, where $f:M\to \R^2$.\end{center}
We construct a probability measure on the space of almost everywhere differentiable solutions of this differential equation and study this measure. A solution of this equation can be viewed as an isometric immersion of  a compact 1-manifold into $\R^2$. Nash's convergence technique in the proof of isometric $C^1$-immersion theorem plays an important role in the construction.
\end{abstract}
\maketitle
\section{Introduction}

This paper originates in an attempt to understand certain remarks of M. Gromov. In \textit{Partial Differential Relations} (\cite{gromov}), Gromov observes  that most under-determined PDE's that occur in geometry and topology are soft; moreover, the solution spaces of such equations are dense in the space of admissible maps. This is the principal underlying philosophy which brings the geometric theory of PDE in the framework of $h$-principle. In an interview with M. Berger (\cite{berger}), he observes that `there is still much to do: Find a measure for the space of solutions (something like Markov fields) and transform the $h$-principle into measure theory.'  
We do not claim to have understood the very meaning of this comment; however, it gave us an inspiration to work in this direction.

In what follows, $M$ will always denote a 1-dimensional compact manifold; that is, either a closed bounded interval or a circle. We consider a very simple but special differential equation for maps $f:M\to \R^2$, namely the equation $\|\dot{f}(t)\|=g(t)$, where $g:M\to \R_+$ is a given continuous function on $M$. By a solution of this equation we shall mean a continuous, almost everywhere differentiable $f$ which satisfies the equation at all points where the function is differentiable. Our aim here is to find a suitable measure on the solution space so that we can say something statistical about a (random) solution.

This equation has its root in geometry; in fact the solutions can be viewed as maps which induce the metric $g^2\,dt^2$ on $M$. Existence of almost everywhere differentiable (Lipschitz) isometric maps in the general set up is treated in \cite[2.4.11]{gromov}. For the purpose of realizing a random solution in this article, we consider the case of one-dimensional manifolds.


The concept of randomness, in this context, can be compared with Brownian motion in the theory of stochastic processes. Brownian motion can be understood as a map from a probability space $(\Omega,{\mathcal F},P)$ to ${\mathcal C}([0,1],\R)$, where ${\mathcal C}([0,1],\R)$ denotes the space of continuous functions $[0,1]\to\R^2$ with the Borel $\sigma$-algebra under the sup norm. This map can be written as $B(t,\omega)$, $t\in[0,1], \omega\in\Omega$. In our case we need a map from a probability space  $(\Omega,{\mathcal F},P)$ to ${\mathcal C}(M,\R^2)$ which will further satisfy the isometry equation $\|\dot{B}(t,\omega)\|=g(t)$, almost everywhere in $M$. Clearly, the differentiability property and the isometry equation impose further restrictions. The Levy-Ciesielski construction of the Brownian motion provides an analogy for our construction with the above restrictions bringing out the differences.


\section{Reduction of the problem}

Let $g:M\to \R_+$ be any continuous function taking positive real values, where $M$ is a closed and bounded interval $I$ or a round circle $S^1$. By a solution of the differential equation
\begin{equation}\|\dot{f}(t)\|=g(t),\ \ \ \mbox{almost every \ } t\in I\label{pde}\end{equation}
we mean a continuous, almost everywhere differentiable $f:M\to\R^2$ which satisfies the equation at all points where the function is differentiable. Our aim is to define a measure on the solution space ${\mathcal S}$ of equation (\ref{pde}) and to be able to say something statistical about a random solution.
We shall first observe that the problem can be reduced to the case when $g$ is a constant function. 

\begin{prop} Let $g:\I\to\R_+$ and $f:I\to \R^2$ be two continuous functions, where $I=[x_0,x_1]$. Then there exists a diffeomorphism $\phi: [0,1] \to I$ and a constant $c\in\R_+$ such that the following condition is satisfied:
\begin{center}If \ \ $\|\frac{df}{dt}(t)\|= g(t)$  \ on \  $I$ \ then \ $\|\frac{d}{dt}(f\circ\phi)(t)\|=c$ \ on \  $[0,1]$.\end{center}
Also, if $f(x_0)=f(x_1)$  then $(f\circ \phi)(0)=(f\circ\phi)(1)$.\label{reduction}\end{prop}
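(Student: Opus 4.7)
The plan is to reparametrize $I$ by the cumulative integral of $g$. Set
\[
L=\int_{x_0}^{x_1} g(u)\,du, \qquad G(s)=\int_{x_0}^{s} g(u)\,du,
\]
so $G:I\to[0,L]$ is a $C^1$ diffeomorphism because $g$ is continuous and strictly positive (hence $G'=g>0$). Take $c=L$ and define
\[
\phi:[0,1]\to I, \qquad \phi(t)=G^{-1}(Lt).
\]
Then $\phi$ is a $C^1$ diffeomorphism with $\phi(0)=x_0$ and $\phi(1)=x_1$, which immediately yields $(f\circ\phi)(0)=(f\circ\phi)(1)$ whenever $f(x_0)=f(x_1)$.

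Next I would compute $\phi'$ by differentiating the identity $G(\phi(t))=Lt$, which gives $g(\phi(t))\,\phi'(t)=L$, i.e.\ $\phi'(t)=L/g(\phi(t))$. Note in particular that $\phi'$ is bounded away from $0$ and $\infty$, so $\phi$ is bi-Lipschitz.

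Finally, apply the chain rule: at every $t\in[0,1]$ for which $f$ is differentiable at $\phi(t)$, one has $(f\circ\phi)'(t)=\phi'(t)\,\dot{f}(\phi(t))$, and therefore
\[
\|(f\circ\phi)'(t)\|=\phi'(t)\,\|\dot{f}(\phi(t))\|=\frac{L}{g(\phi(t))}\cdot g(\phi(t))=L=c.
\]
The only non-cosmetic point is justifying that this holds almost everywhere on $[0,1]$. Since $\phi$ is bi-Lipschitz, it maps null sets to null sets in both directions; hence the preimage under $\phi$ of the full-measure set where $\dot{f}$ exists is again of full measure in $[0,1]$, so the chain rule computation above is valid a.e.\ on $[0,1]$. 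This is the only mild subtlety in the argument; everything else is a direct computation, and I do not anticipate any real obstacle.
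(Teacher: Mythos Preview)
Your argument is correct and is essentially identical to the paper's own proof: both take $c=\int_{x_0}^{x_1} g$ and $\phi$ the inverse of the normalized primitive of $g$, then apply the chain rule. Your additional remark that the bi-Lipschitz property of $\phi$ carries the a.e.\ condition through is a valid refinement that the paper leaves implicit.
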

\begin{proof} Let $\I$ denote the unit interval $[0,1]$. Define $\psi:I\to\I$ by the formula
\begin{center}$\psi(t)=c^{-1}\int_{x_0}^t g(\tau)\,d\tau$ for $t\in I$,\end{center}
where $c$ satisfies the relation $c=\int_{x_0}^{x_1} g(\tau)\,d\tau$. Since $g$ is positive on $I$, $\psi$ is a strictly increasing real valued function on $I$. Moreover, $\psi(x_0)=0$ and $\psi(x_1)=1$. If we set $\phi=\psi^{-1}:\I\to I$, then $\dot{\phi}(s)=c g(\phi(s))^{-1}$.
Now, if $f$ is differentiable then $\frac{d}{ds}(f\circ \phi)(s) =\dot{\phi}(s)\dot{f}(\phi(s))=c g(\phi(s))^{-1}\dot{f}(\phi(s))$. If $\|\dot{f}(t)\|=g(t)$ for some $t\in I$, then taking norm on both sides of the previous equation we get $\|\frac{d}{dt}(f\circ \phi)(s)\|=c$.
\end{proof}
Let ${\mathcal S}_0$ denote the solution space of the equation
\begin{equation}\|\dot{f}(t)\|=1,\ \ \ \mbox{almost every \ } t\in \I,\label{pde1}\end{equation}
where $\I=[0,1]$. In view of Proposition~\ref{reduction}, it is enough to induce a measure on ${\mathcal S}_0$. Since ${\mathcal S}_0$ is contained in $C(\I,\R^2)$, by the theory of Stochastic processes, we need only to construct a measurable map $F:\I\times \Omega\to \R^2$ for some probability measure space $(\Omega,{\mathcal F},P)$, such that $F(\ ,\omega)\in {\mathcal S}_0$ for every $\omega\in\Omega$ and $F(t,\ )$ is measurable for every $t$. Indeed, such an $F$ induces a measurable map $\widehat{F}:\Omega\to {\mathcal S}_0$ (\cite{billingsley}), and thereby we have the push-forward measure $\widehat{F}_*P$ on $\mathcal S_0$.

Let ${\mathcal S}_0'$ denote the space of all formal solutions of equation (\ref{pde1}). Then ${\mathcal S}_0'$ consists of all $\alpha:\I\to\R^2$ which are Riemann integrable and satisfy $\|\alpha(t)\|=1$ for almost every $t\in\I$.  We shall, in fact, construct a measurable map $G:\I\times\Omega\to\R^2$ such that $G(\ ,\omega)\in {\mathcal S}_0'$. We then define $F$ by the formula $F(t,\omega)=\int_0^tG(s,\omega)\,ds$. To see that $F$ has the desired property observe that the integral operator $\mathcal I$ defined by ${\mathcal I}(\alpha)(t)=\int_0^t\alpha(s)\,ds$ maps ${\mathcal S}_0'$ into ${\mathcal S}_0$.

\section{Construction of a probability measure}

We have already observed in Section 1 that equation (\ref{pde}) can be viewed as a special case of isometry equation for Riemannian manifolds. Here we explain this in further details. Let $(M,g)$ be a Riemannian manifold of dimension $n$ and $q> n$. A map $f:M\to\R^q$ is called an \textit{isometric immersion} if it satisfies on $M$ (locally) the following system of partial differential equations:
\begin{equation}\left\langle\frac{\partial f}{\partial x_i},\frac{\partial f}{\partial x_j}\right\rangle=g_{ij}, \ \ \  0\leq i\leq j\leq n,\label{isometry}\end{equation} where $x_1,x_2,\dots,x_n$ is a local coordinate system on $M$ and $g_{ij}$ are metric coefficients relative to these coordinates. Observe that when $\dim M=1$, the system reduces to equation (\ref{pde}) above. In 1954, Nash had proved the existence of $C^1$ solutions of (\ref{isometry}) under mild restrictions (see \cite{nash}). Starting with a strictly $g$-short immersion $f_0:M\to\R^q$, Nash had constructed a sequence $\{f_k\}$ of \textit{strictly} $g$-\textit{short immersions} (see \cite{nash}) which is Cauchy in the fine $C^1$-topology. An isometric $C^1$ immersion $f:M\to\R^q$ was obtained as the limit of this sequence.

In contrast with Nash's paper, we consider (almost everywhere differentiable) Lipshitz solutions of (\ref{pde}) in the present paper. However, we implicitly use Nash's ideas to construct a measure on the space of Lipschitz isometric maps on a manifold of dimension 1. Interested readers may find an analogue of Nash's result for Lipschitz isometric maps for Riemannian manifolds in \cite{gromov}.

With reference to equation (\ref{pde1}), the shortness condition on $\{f_k\}$ mentioned above is translated into the equation $\|\dot{f}_k\|=\sqrt{c_k}<1$. Therefore, if $\{f_k\}$ converges to some isometric immersion $f$ in the $C^1$-topology then $\{\dot{f}_k\}$  converges to $\dot{f}$ in the fine $C^0$ topology, and in particular, $\|\dot{f}_k\|\to 1$ as $k\to\infty$. In Nash's theorem $\{f_k\}$ was so constructed that $\sum_{k=1}^\infty\|\dot{f}_k-\dot{f}_{k-1}\|<\infty$ so that $\{\dot{f}_k\}$ becomes Cauchy in the fine $C^0$ topology. The crucial point was to control the terms $\|\dot{f}_k-\dot{f}_{k-1}\|$, $k=1,2,\dots$ as one constructs the sequence. Now if one looks at this geometrically, $\dot{f}_k$ lies on the circle of radius $\sqrt{c_k}$ for each $k$. Without loss of generality we may assume that the sequence $\{c_k\}$ increases monotonically to 1. For any $t$, plot the vector $\dot{f}_k(t)$ in $\R^2$ and consider the chord through $\dot{f}_k(t)$ which is orthogonal to the vector $\dot{f}_k(t)$. This chord intersects the circle of radius $\sqrt{c_{k+1}}$ at two points, say $x_1$ and $x_2$. Consider the shorter arc $C$ of the circle  which is intercepted between these two points. Then for any point $x$ on this arc, the distance between $x$ and $\dot{f}_k(t)$ is bounded by $\sqrt{c_{k+1}-c_k}$. Therefore, if we choose $f_{k+1}$ so that $\dot{f}_{k+1}(t)$ lies on $C$, then the uniform convergence of the series $\sum_{k=1}^\infty\sqrt{c_{k+1}-c_k}$ ensures the $C^0$ convergence of $\{\dot{f}_k\}$.
The construction of the sequence and its convergence can be realised as a random walk in the space of strictly short immersions which converges to a random solution of (\ref{pde1}).

\begin{lem} There exists a sequence of real numbers $\{c_k\}$ converging to $1$ such that
$\sum_{k=1}^\infty\sqrt{c_k-c_{k-1}}<\infty$.
\label{approximation}
\end{lem}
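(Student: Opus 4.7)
The lemma asks only for an existence statement, so the plan is to exhibit an explicit sequence. The observation is that the sum $\sum_{k=1}^\infty\sqrt{c_k-c_{k-1}}$ converges precisely when we can write the consecutive differences $c_k-c_{k-1}$ as squares of terms of a summable positive series. So the strategy is to pick such a summable series first, and then build $\{c_k\}$ by partial sums.

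Concretely, I would fix any positive sequence $\{a_k\}_{k\geq 1}$ with $\sum_{k=1}^\infty a_k<\infty$; the simplest choice is $a_k=1/k^2$. Let $S=\sum_{k=1}^\infty a_k^2$, which is finite (for $a_k=1/k^2$ one has $S=\pi^4/90$, but only finiteness is needed). Set $c_0=1-S$ and, for $k\geq 1$, define
\[
c_k \;=\; c_0+\sum_{j=1}^{k} a_j^2.
\]
Then $\{c_k\}$ is a strictly increasing sequence, $c_k-c_{k-1}=a_k^2>0$, and by construction $c_k\to c_0+S=1$ as $k\to\infty$.

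Finally, the sum condition follows immediately since $\sqrt{c_k-c_{k-1}}=a_k$, so
\[
\sum_{k=1}^\infty\sqrt{c_k-c_{k-1}} \;=\; \sum_{k=1}^\infty a_k \;<\;\infty.
\]
If one also wants $c_0\geq 0$ (so that each $c_k$ can legitimately serve as a squared speed $\|\dot{f}_k\|^2$, as in the geometric discussion preceding the lemma), one simply scales the $a_k$ by a small enough constant $\lambda>0$ to make $\lambda^2 S\leq 1$, replacing $a_k$ by $\lambda a_k$; this preserves both summability of $\{a_k\}$ and the convergence $c_k\to 1$. There is no real obstacle here; the only mild subtlety is choosing a sequence whose squares are summable \emph{and} whose square roots (i.e.\ the sequence itself) are also summable, which is why picking $a_k$ first rather than $c_k-c_{k-1}$ first makes the construction transparent.
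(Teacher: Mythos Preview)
Your proof is correct. Both your argument and the paper's are elementary existence constructions, but they proceed differently. The paper fixes $0<\alpha<1$, takes any $c_0\in[0,1)$, and defines $c_n=c_{n-1}+\alpha(1-c_{n-1})$; this gives $1-c_n=(1-\alpha)^n(1-c_0)$ and $c_n-c_{n-1}=\alpha(1-\alpha)^{n-1}(1-c_0)$, so $\sum\sqrt{c_n-c_{n-1}}$ is a convergent geometric series. Your approach instead works backward: you choose the summable sequence $a_k$ of square roots of the gaps first, and recover $c_k$ as partial sums. Your route makes the mechanism behind the lemma transparent, while the paper's recursive definition has the practical advantage that it starts from an \emph{arbitrary} $c_0\in[0,1)$ by design---a feature actually used later in Theorem~\ref{main}, where one needs $c_0=\|b-a\|$. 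Your scaling remark covers this as well. One minor comment: the ``mild subtlety'' you mention is in fact no subtlety, since $\sum a_k<\infty$ with $a_k>0$ already forces $\sum a_k^2<\infty$.
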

\begin{proof} Fix a real number $\alpha$ such that $0<\alpha<1$. Then starting with any $c_0$, $0\leq c_0<1$, define $c_n=c_{n-1}+\alpha(1-c_{n-1})$ for $n\geq 1$. Then $\{c_n\}$ has the desired properties.
\end{proof}

We now fix some notation.\\

\noindent\textbf{Notation 1.} For any positive integer $k$, $r_k:[0,1]\to \{-1,1\}$ denote the Rademacher function defined by
\begin{center}$r_k(t)= 1-2($the $k$-th dyadic coefficient of $t)$\end{center}
for all $t\in [0,1]$. Clearly, $r_k$ is a step function having jumps at $j/2^k$ for $j=1,2,\dots ,2^k-1$.\\

\noindent\textbf{Notation 2.} For every positive integer $k$, and $j\in\{1,2,\dots 2^{k-1}\}$, we define an interval $I_{kj}$ as follows:
$$\begin{array}{lcl}I_{kj} & = & \left\{
\begin{array}{rcl}
\left[\frac{j-1}{2^{k-1}},\frac{j}{2^{k-1}}\right) & \mbox{if} & 1 \leq j < 2^{k-1}\\
\mbox{}& & \\
\left[\frac{2^{k-1}-1}{2^{k-1}},1\right] & \mbox{if} & j=2^{k-1}
\end{array}
\right.\end{array}$$

\noindent\textbf{Notation 3.} Let $\Omega_0=\{-1,1\}$ be the probability measure space, where $-1$ and $1$ both occur with equal probability. Let $\Omega$ denote the product measure space $\Omega=\Pi_{k=1}^\infty\Pi_{j=1}^{2^{k-1}}\{-1,1\}$. \\

To simplify the presentation, we have identified $\R^2$ with $\C$ in the subsequent discussion.

\begin{lem} Given an increasing sequence of real numbers, $\{c_n\}$, we can generate a random walk in the space of all piecewise continuous paths $\alpha(t)$ such that $\|\alpha(t)\|<1$ for all $t$. The $n$-step truncation $\alpha_n:\I\times\Omega\to \C$ of the random walk is given by
$$\alpha_n(t,\{d_{kj}\})=\sqrt{\frac{c_n}{c_0}}\,\alpha_0\exp i\left\{\sum_{k=1}^{n}\sum_{j=1}^{2^{k-1}} r_k(t)1_{I_{kj}}(t) d_{kj}\sin^{-1}\sqrt{\frac{c_k-c_{k-1}}{c_k}} \right\},$$   
when $c_0\neq 0$, where $\|\alpha_0\|=c_0$; for $c_0=0$ it is given by
$$\alpha_n(t,\{d_{kj}\})=\sqrt{c_n}\exp i\left\{\sum_{k=1}^{n}\sum_{j=1}^{2^{k-1}} r_k(t)1_{I_{kj}}(t) d_{kj}\sin^{-1}\sqrt{\frac{c_k-c_{k-1}}{c_k}} \right\},$$
where $\{d_{kj}\}_{j=1,k=1}^{2^{k-1},\infty}$ is a collection of independent and identically distributed (in short, iid) random variables taking values $\pm 1$ with equal probaility defined on the product probability space $(\Omega,{\mathcal F},P)$ and $1_{I_{kj}}$ denote the characteristic function of the inerval $I_{kj}$.
\label{random walk}\end{lem}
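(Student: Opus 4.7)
My plan is to peel the formula apart layer by layer and recognize each layer as one step of the random walk between nested circles described in the paragraph preceding the lemma.

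First I set $\beta_k := \arcsin\sqrt{(c_k-c_{k-1})/c_k}$ and, for each $t\in[0,1]$, write $j(t,k)$ for the unique $j\in\{1,\dots,2^{k-1}\}$ with $t\in I_{kj}$. With these notations the formula telescopes into
\begin{equation*}
\alpha_n(t,\omega)=\sqrt{\frac{c_n}{c_{n-1}}}\,\alpha_{n-1}(t,\omega)\,\exp\bigl(i\,r_n(t)\,d_{n,j(t,n)}\,\beta_n\bigr),
\end{equation*}
which already exhibits $\{\alpha_n\}$ as a random walk: passing from level $n-1$ to level $n$ appends a new layer of rotations by $\pm\beta_n$ whose signs are indexed by the fresh variables $\{d_{nj}\}_{j=1}^{2^{n-1}}$, independent of all previous layers by the product structure of $\Omega$.

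Next I would justify the geometric interpretation of each increment. Fix $t$ and $\omega$ and write $\alpha_{n-1}(t,\omega)=\rho\,e^{i\theta}$. Elementary trigonometry on the right triangle with legs $\sqrt{c_{n-1}}$ and $\sqrt{c_n-c_{n-1}}$ and hypotenuse $\sqrt{c_n}$ identifies $\beta_n$ as the angle at the origin, and hence the two complex numbers
\begin{equation*}
\rho\,e^{i\theta}\,\sqrt{c_n/c_{n-1}}\,e^{\pm i\beta_n}
\end{equation*}
are precisely the pair of intersection points of the circle of radius $\rho\sqrt{c_n/c_{n-1}}$ with the chord perpendicular to $\alpha_{n-1}(t,\omega)$ through $\alpha_{n-1}(t,\omega)$. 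This is exactly the Nash-type move recalled before the lemma, and the sign $r_n(t)\,d_{n,j(t,n)}\in\{\pm 1\}$ selects which of the two points the walk visits at time $t$.

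Finally I would record the two remaining assertions. Piecewise continuity of $\alpha_n(\cdot,\omega)$ is immediate since its argument is a finite linear combination of step functions on the dyadic partition of $[0,1]$ at level $n$. For the bound $\|\alpha_n(t,\omega)\|<1$, the exponent is purely imaginary, so
\begin{equation*}
\|\alpha_n(t,\omega)\|=\sqrt{c_n/c_0}\,\|\alpha_0\|,
\end{equation*}
which under the natural normalization $\|\alpha_0\|^2=c_0$ collapses to $\sqrt{c_n}$; this is strictly less than $1$ because $c_n<1$ by Lemma~\ref{approximation}. The only genuine obstacle is notational: one must check that the Rademacher factors $r_k(t)$ combined with the indicators $1_{I_{kj}}(t)$ introduce at each level $k$ a genuinely fresh iid binary choice on each of the $2^{k-1}$ dyadic subintervals. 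Once the telescoped recursion above is in hand, the lemma reduces to the trigonometric verification indicated.
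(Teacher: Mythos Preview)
Your argument is correct and follows essentially the same route as the paper: both establish the one-step recursion $\alpha_n=\sqrt{c_n/c_{n-1}}\,\alpha_{n-1}\exp(i\,r_n(t)\,d_{n,j(t,n)}\,\beta_n)$ and identify it with the perpendicular-chord construction on the circle of radius $\sqrt{c_n}$. The only difference is presentational---the paper builds the formula bottom-up by writing out $\alpha_1$ and $\alpha_2$ explicitly and then invoking induction, whereas you start from the closed formula and telescope it down; you also correctly spot that the normalization should read $\|\alpha_0\|=\sqrt{c_0}$ rather than $\|\alpha_0\|=c_0$ for the modulus computation to yield $\sqrt{c_n}<1$.
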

\begin{proof}
Let $\alpha_0:\I\to\C$ be a constant path such that $\|\alpha_0\|=\sqrt{c_0}$.
We want to construct a sequence of step functions $\alpha_n:\I\to\R^2$ such that $\|\alpha_n\|=\sqrt{c_n}$. This is done inductively in the following way. Suppose that we have constructed $\alpha_{n}$ which is constant on some maximal interval $[x_0,x_1)$. Consider the chord of the circle of radius $\sqrt{c_{n+1}}$ through the point $\alpha_n(x_0)$. If the endpoints of the chords are respectively $e_1$ and $e_2$ then we define $\alpha_{n+1}$ on $[x_0,x_1)$ as follows: $\alpha_{n+1}$ takes constant values $e_1$ and $e_2$ respectively on the intervals $[x_0,(x_0+x_1)/2)$ and $[(x_0+x_1)/2,x_1)$. Observe that there are two possible solutions for $\alpha_{n+1}$. If we iterate this process indefinitely then it generates a random walk in the space of simple functions.

Explicitly, we define $\alpha_1$ on $\I$ as follows: If $c_0\neq 0$ then

\begin{center}$\begin{array}{rcl}\alpha_1(t,\{d_{kj}\}) & = & \left\{
\begin{array}{ll}\sqrt{\frac{c_1}{c_0}}\alpha_0\exp \left\{id_{11}\sin^{-1}\sqrt{\frac{c_1-c_0}{c_1}}\,\right\} & \mbox{ if } 0\leq t<1/2\\
\sqrt{\frac{c_1}{c_0}}\alpha_0\exp \left\{-id_{11}\sin^{-1}\sqrt{\frac{c_1-c_0}{c_1}}\,\right\} & \mbox{ if } 1/2\leq t\leq 1\end{array}\right.\end{array}$\end{center}
The function $\alpha_1$ takes constant value in each of the open dyadic subinterval, $\|\alpha_1(t)\|=\sqrt{c_1}$ for all $t$ and the convex hull of the image of $\alpha_1$ contains $\alpha_0(t)$ for all $t$. The randomness is introduced through $d_{11}$ which takes the value $\pm 1$ with equal probability.

Using the Rademacher function $r_1$ (see Notation 1), the expression for $\alpha_1$, when $c_0\neq 0$, can be written as
\begin{center}$\begin{array}{rcl}\alpha_1(t,\{d_{kj}\}) & = &
\sqrt{\frac{c_1}{c_0}}\alpha_0\exp \left\{id_{11}r_1(t)\sin^{-1}\sqrt{\frac{c_1-c_0}{c_1}}\,\right\},\ \ 0\leq t\leq 1\end{array}$\end{center}

If $c_0=0$ then define
\begin{center}$\begin{array}{rcl}\alpha_1(t,\{d_{kj}\}) & = &
\sqrt{c_1}\exp \left\{id_{11}r_1(t)\pi/2\right\}, \ \ 0\leq t\leq 1\end{array}$\end{center}

On each of the intervals $[0,1/2)$ and $[1/2,1]$, $\alpha_1$ is constant. Therefore, after restricting $\alpha_1$ to any of these subintervals we can perform the same process to obtain $\alpha_2$. For this, we need to introduce randomness for each of these subintervals. This gives us the following expression for $\alpha_2$:

\begin{center}$\begin{array}{rcl}\alpha_2(t,\{d_{kj}\}) & = & \left\{
\begin{array}{ll}\sqrt{\frac{c_2}{c_1}}\,\alpha_1\exp \left\{id_{21}\sin^{-1}\sqrt{\frac{c_2-c_1}{c_2}}\,\right\} & \mbox{ if } 0\leq t<1/2^2\\
\sqrt{\frac{c_2}{c_1}}\,\alpha_1\exp \left\{-id_{21}\sin^{-1}\sqrt{\frac{c_2-c_1}{c_2}}\,\right\} & \mbox{ if } 1/2^2\leq t< 1/2\end{array}\right.\\
 & = &\sqrt{\frac{c_2}{c_1}}\,\alpha_1\exp \left\{ir_2(t)d_{21}\sin^{-1}\sqrt{\frac{c_2-c_1}{c_2}}\,\right\} \ \ \ \mbox{ if } 0\leq t<1/2\end{array}$\end{center}

\begin{center}$\begin{array}{rcl}\alpha_2(t,\{d_{kj}\}) & = & \left\{
\begin{array}{ll}\sqrt{\frac{c_2}{c_1}}\,\alpha_1\exp \left\{id_{22}\sin^{-1}\sqrt{\frac{c_2-c_1}{c_2}}\,\right\} & \mbox{ if } 1/2\leq t< 3/4\\
\sqrt{\frac{c_2}{c_1}}\,\alpha_1\exp \left\{-id_{22}\sin^{-1}\sqrt{\frac{c_2-c_1}{c_2}}\,\right\} & \mbox{ if } 3/4\leq t\leq 1\end{array}\right.\\
& = & \sqrt{\frac{c_2}{c_1}}\,\alpha_1\exp \left\{ir_2(t)d_{22}\sin^{-1}\sqrt{\frac{c_2-c_1}{c_2}}\,\right\} \ \ \ \mbox{ if } 1/2\leq t\leq 1,\end{array}$\end{center}
where $\alpha_1=\alpha_1(t,\{d_{kj})\}$ is the random function defined in the previous step.

The above expressions can be combined into one equation as follows for the case $c_0\neq 0$:
\begin{center}$\begin{array}{rcl}\alpha_2(t,\{d_{kj}\}) & = & \sqrt{\frac{c_2}{c_1}}\,\alpha_1\exp \left\{\sum_{j=1}^2 ir_2(t)1_{I_{2j}}d_{2j}\sin^{-1}\sqrt{\frac{c_2-c_1}{c_2}}\,\right\}\\
& = & \sqrt{\frac{c_2}{c_0}}\,\alpha_0\exp \left\{\sum_{k=1}^2\sum_{j=1}^{2^{k-1}} ir_k(t)1_{I_{kj}}(t)d_{kj}\sin^{-1}\sqrt{\frac{c_k-c_{k-1}}{c_k}} \,\right\},\end{array}$\end{center}
where $I_{kj}=[(j-1)/2^{k-1},j/2^{k-1})$, $j=1,2,\dots,2^{k-1}$. Similarly, we can also get an expression for $\alpha_2$ in the case $c_0=0$. We can apply induction argument to complete the proof of the lemma.\end{proof}


\textbf{Observation:} The random function $\alpha_n$ constructed in the above lemma has the following properties: For each $\omega\in\Omega$,
\begin{enumerate}\item $\|\alpha_n(t,\omega)\|=\sqrt{c_n}$ for all $t\in\I$;
\item $\alpha_n(\ ,\omega)$ is constant on each interval $\left[\frac{j-1}{2^n},\frac{j}{2^n}\right)$, $1\leq j\leq 2^{n}$; 
\item $\int_0^1\alpha_n(t,\omega)\, dt=\alpha_0$ for each $n$. This is clearly true for $n=1$; moreover, for any $n>1$  we can write 
\begin{center}$\int_0^1\alpha_n(t,\omega)\, dt=
\sum_{j=1}^{2^{n-1}}\int_{\frac{j-1}{2^{n-1}}}^{\frac{j}{2^{n-1}}}\alpha_n(t,\omega)\, dt= \frac{1}{2^{n-1}}\sum_{j=1}^{2^{n-1}}\alpha_{n-1}(\frac{j-1}{2^{n-1}},\omega)$\end{center} 
which is further equal to $\int_0^1\alpha_{n-1}(t,\omega)\, dt$; 
\item For each $t\in\I$, $\omega\mapsto\alpha_n(t,\omega)$ is a measurable function on $\Omega$.\end{enumerate}

\begin{lem} For any infinite sequence $\{c_n\}$ characterised by Lemma~\ref{approximation}, the following series
$$\sum_{k=1}^\infty\sum_{j=1}^{2^{k-1}} r_k(t)1_{I_{kj}}(t)d_{kj}\sin^{-1}\sqrt{\frac{c_k-c_{k-1}}{c_k}}$$
converges uniformly for each random sequence $\{d_{kj}\}$, where $d_{kj}\in\{1,-1\}$.\label{convergence}
\end{lem}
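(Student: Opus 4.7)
The plan is to collapse the inner sum over $j$ to a single term by exploiting the partition structure of the intervals $I_{kj}$, then apply the Weierstrass $M$-test using the summability supplied by Lemma~\ref{approximation}.

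First I would note the crucial observation that for each fixed $k \geq 1$, the intervals $\{I_{kj}\}_{j=1}^{2^{k-1}}$ partition $\I$ by construction (Notation 2). Consequently, for every $t \in \I$, exactly one of the indicator functions $1_{I_{kj}}(t)$ equals $1$ and all others vanish. Therefore the inner sum at level $k$ reduces to a single term, and for any $t$ and any choice of signs $\{d_{kj}\}$,
$$\left| \sum_{j=1}^{2^{k-1}} r_k(t)\, 1_{I_{kj}}(t)\, d_{kj} \sin^{-1}\sqrt{\frac{c_k - c_{k-1}}{c_k}} \right| \leq \sin^{-1}\sqrt{\frac{c_k - c_{k-1}}{c_k}},$$
since $|r_k(t)| = 1$ and $|d_{kj}| = 1$. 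Thus the inequality is uniform in both $t$ and $\omega$, which is exactly what is needed for uniform convergence.

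Next I would convert this bound into something summable. Using the elementary inequality $\sin^{-1} x \leq \tfrac{\pi}{2} x$ for $x \in [0,1]$, together with the fact that $\{c_k\}$ is increasing and converges to $1$, so that $c_k \geq c_1 > 0$ for all $k \geq 1$, I obtain
$$\sin^{-1}\sqrt{\frac{c_k - c_{k-1}}{c_k}} \leq \frac{\pi}{2\sqrt{c_1}}\,\sqrt{c_k - c_{k-1}}$$
for all $k \geq 1$. (In the exceptional case $c_0 = 0$, the $k=1$ term is simply the constant $\pi/2$, which does not affect convergence, and the estimate for $k \geq 2$ is unchanged.) Combined with Lemma~\ref{approximation}, which guarantees $\sum_{k=1}^\infty \sqrt{c_k - c_{k-1}} < \infty$, the Weierstrass $M$-test gives uniform convergence of the series over $t \in \I$ and simultaneously over all sign sequences $\{d_{kj}\}$.

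There is no real obstacle in this argument; the proof is essentially a bookkeeping exercise once one recognizes that the double sum degenerates to a single sum at each level due to the partition property of $\{I_{kj}\}_j$. The only care needed is to replace the factor $1/\sqrt{c_k}$ by the uniform lower bound $1/\sqrt{c_1}$ and to handle the degenerate $c_0=0$ case separately for the first term.
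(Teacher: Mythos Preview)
Your proof is correct and follows essentially the same approach as the paper: collapse the inner sum via the partition property of the $I_{kj}$, then establish absolute (hence uniform) convergence by comparing $\sin^{-1}\sqrt{(c_k-c_{k-1})/c_k}$ with $\sqrt{c_k-c_{k-1}}$. The only cosmetic difference is that the paper invokes the limit comparison test (via $\lim_{x\to 0}\sin x/x=1$) whereas you use the explicit bound $\sin^{-1}x\le \tfrac{\pi}{2}x$; both accomplish the same reduction.
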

\begin{proof} It is enough to prove that the series converges absolutely. First observe that for a given $t$ and $k$, there exists a unique $j$ such that $t\in I_{kj}$. Hence, we need only to prove that the series $\sum_{k=1}^\infty\sin^{-1}\sqrt{\frac{c_k-c_{k-1}}{c_k}}$ is convergent. Since the sequence $c_k$ is convergent, $\sqrt\frac{c_k-c_{k-1}}{c_k}$ converges to zero. Therefore, the two series, namely $\sum_{k=1}^\infty\sin^{-1}\sqrt{\frac{c_k-c_{k-1}}{c_k}}$ and $\sum_{k=1}^\infty\sqrt{\frac{c_k-c_{k-1}}{c_k}}$ will converge or diverge simultaneously. This follows from the fact that $\lim_{x\to 0}\frac{\sin x}{x}=1$. Since by  hypothesis, the sequence $\{c_k\}$ converges to non-zero real number and $\sum_{k=1}^\infty\sqrt{c_k-c_{k-1}}<\infty$, we conclude that
$\sum_{k=1}^\infty\sqrt{\frac{c_k-c_{k-1}}{c_k}}$ also converges. This proves the lemma.\end{proof}

Let $\{\alpha_n\}$ be as in Lemma~\ref{random walk}. Define $\alpha:\I\times\Omega\to \R^2$ by
$$\begin{array}{rcl}\alpha(t,\{d_{kj}\}_{j=1,k=1}^{2^{k-1},\infty}) & = & \lim_{n\to\infty}\alpha_n(t,\{d_{kj}\}_{j=1,k=1}^{2^{k-1},n})\end{array}.$$
It follows from Lemma~\ref{convergence} that the limit exists for all $t$. Further, the map $\alpha$ so defined is measurable since it is the limit of a sequence of measurable functions  $\alpha_n$.
We summarise properties of $\alpha$ in the following theorem.
\begin{thm} Let $\{c_n\}$ be an infinte sequence of real numbers such that $\lim_{n\to\infty}c_n=1$ and  $\sum_{n=1}^\infty\sqrt{c_n-c_{n-1}}$ is finite. Then the random walk generated by $\{c_n\}$ as described in Lemma~\ref{random walk} converges to some measurable function $\alpha:\I\times\Omega\to \R^2$, such that for each $\omega$,
\begin{enumerate}\item[$(i)$] $\alpha(\ ,\omega)$ is continuous at all $t\in \I\setminus D$, where $D$ is the set of all dyadic rationals. In particular, $\alpha(\ ,\omega)$ is Riemann integrable on $[0,1]$.
\item[$(ii)$]  $\alpha(\ ,\omega)$ is right continuous with left limits, at all dyadic rationals.
\item[$(iii)$] $\|\alpha(t,\omega)\|=1$ for all $t\in \I$ and $\int_0^1\alpha(t,\omega)\,dt=\alpha_0$.\end{enumerate}\label{alpha theorem}
\end{thm}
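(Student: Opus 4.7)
The plan is to upgrade the pointwise convergence $\alpha_n\to\alpha$ to uniform convergence in $t\in\I$ for each fixed $\omega$, and then read off each of the three properties from the corresponding property of the prelimit step functions $\alpha_n(\cdot,\omega)$. Since Lemma~\ref{convergence} asserts that the phase series
\[
\sum_{k=1}^\infty\sum_{j=1}^{2^{k-1}} r_k(t)1_{I_{kj}}(t)\,d_{kj}\sin^{-1}\sqrt{(c_k-c_{k-1})/c_k}
\]
converges uniformly in $t$, and since the complex exponential is continuous on the bounded phase range and $\sqrt{c_n/c_0}\to 1/\sqrt{c_0}$, the closed-form expression for $\alpha_n$ in Lemma~\ref{random walk} gives $\|\alpha_n(\cdot,\omega)-\alpha(\cdot,\omega)\|_\infty\to 0$. (The case $c_0=0$ is handled analogously with the $\alpha_0$ factor dropped.)

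For (i) and (ii), observe that each $\alpha_n(\cdot,\omega)$ is a step function constant on every half-open interval $[(j-1)/2^n,j/2^n)$, $1\le j\le 2^n$; hence it is continuous at every $t\in\I\setminus D$ and right-continuous with a well-defined left limit at every dyadic $t$. These three properties pass to a uniform limit by standard arguments: continuity at a non-dyadic $t$ and right-continuity at a dyadic $t$ follow from the usual $\vare/3$ estimate, while the existence of a left limit at a dyadic $t$ follows from a Cauchy argument, namely, if $L_n$ denotes the left limit of $\alpha_n(\cdot,\omega)$ at $t$, then $|L_n-L_m|\le\|\alpha_n-\alpha_m\|_\infty$, so $\{L_n\}$ is Cauchy and its limit is the left limit of $\alpha(\cdot,\omega)$ at $t$. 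Since $\alpha(\cdot,\omega)$ is bounded and discontinuous on at most the countable set $D$, it is Riemann integrable.

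For (iii), the identity $\|\alpha(t,\omega)\|=1$ is immediate from $\|\alpha_n(t,\omega)\|=\sqrt{c_n}\to 1$. For the integral, Observation (3) gives $\int_0^1\alpha_n(t,\omega)\,dt=\alpha_0$ for every $n$; as $\|\alpha_n\|\le 1$ uniformly, the bounded convergence theorem yields $\int_0^1\alpha(t,\omega)\,dt=\alpha_0$.

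The main technical points are the uniform convergence in the first paragraph and the Cauchy-sequence argument for left limits at dyadic rationals; the latter is slightly subtler than the $\vare/3$ arguments, since the ordinary continuity statements for a uniform limit do not by themselves produce one-sided limits. Once these are in place, everything else is either already recorded in the Observation following Lemma~\ref{random walk} or is an automatic consequence of standard properties of uniform limits and bounded convergence.
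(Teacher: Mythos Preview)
Your argument is correct and follows essentially the same route as the paper: use the uniform convergence of $\alpha_n(\cdot,\omega)$ to $\alpha(\cdot,\omega)$ (coming from Lemma~\ref{convergence}) to transfer continuity off $D$, right-continuity with left limits at $D$, the norm identity, and the integral identity from the $\alpha_n$ to $\alpha$. You supply more detail than the paper does---in particular the Cauchy argument for the existence of left limits, which the paper leaves implicit---but the strategy is the same.
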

\begin{proof}Indeed, if $x\in\I\setminus D$, then each $\alpha_n(\ ,\omega)$ is continuous at $x$ (Observation (2)), so that $\lim_{t\to x}\alpha_n(t,\omega)=\alpha_n(x,\omega)$. Now, since $\alpha_n(\ ,\omega)$ converges uniformly to $\alpha(\ ,\omega)$, we have
\begin{center}$\lim_{t\to x}\lim_{n\to\infty}\alpha_n(t,\omega)=\lim_{n\to\infty}\lim_{t\to x}\alpha_n(t,\omega)$\end{center}
which implies that $\lim_{t\to x}\alpha(t,\omega)=\alpha(x,\omega)$, that is, $\alpha(\ ,\omega)$ is continuous at $x$. This proves (i). To prove (ii), note that $\alpha_n(\ ,\omega)$ is right-continuous with left limits at each dyadic rational and that the sequence $\alpha_n(\ ,\omega)$ converges uniformly to $\alpha(\ ,\omega)$. Finally, (iii) follows from observations (1) and (3) listed above, as $\alpha_n$ converges to $\alpha$.
\end{proof}

\noindent\textbf{Observation:} Let ${\mathcal D}[0,1]$ denote the set of all functions $[0,1]\to\R^2$ which are right continuous with left limits. We endow it with the Skorohod topology (\cite{billingsley}) and consider the associated Borel $\sigma$-field on it. It follows from Theorem~\ref{alpha theorem} that $\alpha$ induces a map $\alpha_*:\Omega\to {\mathcal D}[0,1]$ defined by $\alpha_*(\omega)(t)=\alpha(t,\omega)$ for $\omega\in\Omega$ and $t\in \I$. It follows from Observation (4) that $\alpha_t:\Omega\to\R^2$ ($t\in\I$), defined by $\alpha_t(\omega)=\alpha(t,\omega)$, is the limit of a sequence of measurable functions; hence $\alpha_t$ is measurable for each $t\in\I$. This makes $\alpha_*$ measurable (see \cite{billingsley}). Further observe that $\alpha_*$ actually takes values in ${\mathcal S}'_0$ (defined as in Section 2). Hence, $\alpha_*$ induces a measure, namely the push-forward measure $\alpha_*P$, on ${\mathcal S}_0'$.

We are now in a position to prove the main result of this paper.

\begin{thm} Let $a,b$ be two points in $\R^2$ such that $\|b-a\|<1$. There is a probability measure $\mu$ on the space of all continuous paths $f:\I\to\R^2$ which satisfy the relation $\|\dot{f}(t)\|=1$ for almost all $t\in [0,1]$ and $f(0)=a$ and $f(1)=b$.
\label{main}
\end{thm}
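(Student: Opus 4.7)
The plan is to adapt the construction of Theorem~\ref{alpha theorem} so that the starting datum encodes the prescribed endpoints, and then define the path via integration. Set $c_0 = \|b-a\|^2$, which lies in $[0,1)$ by hypothesis, and take $\alpha_0$ to be the constant path with value $b - a$ (so $\|\alpha_0\|^2 = c_0$). Applying Lemma~\ref{approximation} with this $c_0$ produces an increasing sequence $\{c_n\}$ with $c_n \to 1$ and $\sum_{n\geq 1} \sqrt{c_n - c_{n-1}} < \infty$, which in turn feeds Lemma~\ref{random walk} and yields the random step-function sequence $\alpha_n:\I\times\Omega\to\C$.

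By Theorem~\ref{alpha theorem}, the sequence $\alpha_n(\cdot,\omega)$ converges uniformly in $t$ to a measurable $\alpha:\I\times\Omega\to\R^2$ satisfying $\|\alpha(t,\omega)\|=1$ everywhere and $\int_0^1 \alpha(t,\omega)\,dt = \alpha_0 = b-a$ (Observation~(3) passes to the limit by uniform convergence together with the uniform bound $\|\alpha_n\|\leq 1$). Define
\[
F:\I\times\Omega\to\R^2,\qquad F(t,\omega) \;=\; a + \int_0^t \alpha(s,\omega)\,ds.
\]
Since $\alpha(\cdot,\omega)$ is continuous off the countable set $D$ of dyadic rationals and uniformly bounded, $F(\cdot,\omega)$ is continuous on $\I$ (in fact Lipschitz with constant $1$), differentiable at every $t\in \I\setminus D$ with $\dot F(t,\omega) = \alpha(t,\omega)$, and therefore satisfies $\|\dot F(t,\omega)\|=1$ for almost every $t$. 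The choice of $\alpha_0$ then gives $F(0,\omega)=a$ and $F(1,\omega) = a + (b-a) = b$ for every $\omega\in\Omega$.

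It remains to push the probability measure $P$ on $(\Omega,\mathcal F)$ forward to the desired path space $\mathcal S_{a,b}$ (continuous unit-speed paths from $a$ to $b$) equipped with its Borel $\sigma$-algebra coming from the sup norm. For each fixed $t$, the map $\omega\mapsto F(t,\omega)$ is measurable, being the uniform limit in $n$ of $\omega\mapsto a + \int_0^t \alpha_n(s,\omega)\,ds$, each of which is a finite sum of measurable $\Omega\to\R^2$ functions (Observation~(4) of Section~3). Because every sample path $F(\cdot,\omega)$ is continuous, the standard argument (see \cite{billingsley}) upgrades this to measurability of $\widehat F:\Omega\to C(\I,\R^2)$, $\omega\mapsto F(\cdot,\omega)$. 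The push-forward $\mu := \widehat F_* P$ is then a probability measure concentrated on $\mathcal S_{a,b}$, as required.

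There is no substantive analytic obstacle beyond Theorem~\ref{alpha theorem} itself; the only point requiring care is the identity $F(1,\omega)=b$, which relies on the fact that the conservation law $\int_0^1 \alpha_n(\cdot,\omega)\,dt = \alpha_0$ in Observation~(3) is built into every stage of the Nash-type refinement and survives the uniform limit. Once this is in hand, the theorem is a matter of packaging the already-constructed random walk as a measure on the correct endpoint-constrained subspace.
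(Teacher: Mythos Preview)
Your proof is correct and follows essentially the same route as the paper: set $\alpha_0=b-a$, feed it into the random-walk construction of Lemma~\ref{random walk} and Theorem~\ref{alpha theorem}, integrate to define the random path, and push forward $P$. Your choice $c_0=\|b-a\|^2$ is in fact the one consistent with the convention $\|\alpha_0\|=\sqrt{c_0}$ of Lemma~\ref{random walk}; the paper writes $c_0=\|b-a\|$, which appears to be a slip.
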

\begin{proof} Define $f_0:\I\to\R^2$ by $f_0(t)=a+t(b-a)$ for $t\in\I$, so that $\dot{f}_0(t)=b-a$. The condition $\|b-a\|<1$ implies that $f_0$ is strictly short relative to the metric $dt^2$ on $\I$, and hence the space of isometric paths is non-empty by Nash's theorem (\cite{nash}). Let $c_0=\|b-a\|$ and extend it to a sequence $\{c_n\}$ satisfying conditions of Lemma~\ref{approximation}. Then taking $\alpha_0=\dot{f}_0$ we can construct a sequence $\{\alpha_n\}$ as in Lemma~\ref{random walk} and get $\alpha$ as the limit of this sequence. Define a function $f:[0,1]\times\Omega\to\R^2$ as follows:
\begin{center}$\begin{array}{rcl}
f(t,\omega) & =: & a\,+\,\int_0^t\alpha(s,\omega)\,ds\end{array}$\end{center}
By the Fundamental Theorem of Calculus, $f(\ ,\omega)$ is absolutely continuous for each $\omega$ and $\dot{f}(t,\omega)=\alpha(t,\omega)$ for all $t\in \I\setminus D$ since $\alpha$ is continuous at these points. Further, since we can express $f$ as the limit of Riemann sums, it follows from the measurability of each $\alpha_t$ that $f_t:\omega\mapsto f(t,\omega)$ is also measurable for each $t\in\I$. Thus $f_*:\omega\mapsto f(\ ,\omega)$ is a measurable map $\Omega\to {\mathcal C}([0,1],\R^2)$ (see \cite{billingsley}), where ${\mathcal C}([0,1],\R^2)$ is the space of continuous functions from $[0,1]$ to $\R^2$ with the sup topology. In particular, $f_*$ takes values in ${\mathcal S}_0$ since $\dot{f}(t,\omega)=\alpha(t,\omega)$ for all $t\in \I\setminus D$ and $\|\alpha(t,\omega)\|=1$. Further, since $\int_0^1\alpha(t,\omega)\,dt=\alpha_0$, it follows that $f(\ ,\omega)$ has the same endpoints as $f_0$ for each $\omega$. The pushforward measure $f_*P$ is then the desired measure on the solution space of equation (\ref{pde1}) with the boundary conditions as stated in the theorem.
\end{proof}

As a direct consequence of the Theroem~\ref{main} we obtain
\begin{cor} There is a probability measure on the space of continuous maps $f:S^1\to\R^2$ defined on the unit circle $S^1$ such that $\|\dot{f}(t)\|=1$ for almost all $t\in S^1$ and $f(1)=a$ for some fixed $a$ in $\R^2$.
\end{cor}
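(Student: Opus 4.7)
The plan is to specialize Theorem~\ref{main} to the boundary case $b = a$. Since $\|b - a\| = 0 < 1$, the hypothesis of Theorem~\ref{main} is satisfied, yielding a probability measure $\mu$ on the space of continuous paths $f : \I \to \R^2$ with $\|\dot{f}(t)\| = 1$ almost everywhere in $\I$ and $f(0) = f(1) = a$. The case $c_0 = \|b - a\| = 0$ is precisely what the second branch of the formula in Lemma~\ref{random walk} was set up to handle, so no new analytic work is required. Observe moreover that the closing-up of the trajectory is automatic: the random walk is designed so that the invariant $\int_0^1 \alpha_n(t, \omega)\,dt = \alpha_0$ is preserved at every stage, hence $\alpha_0 = b - a = 0$ forces the integrated path $f(\,\cdot\,, \omega)$ to end where it began.

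Next, I would identify $S^1$ with the quotient $\I / \{0 \sim 1\}$ via a quotient map $q : \I \to S^1$, normalizing so that the common image of the endpoints corresponds to the distinguished point $1 \in S^1 \subset \C$. Any continuous path $f : \I \to \R^2$ with $f(0) = f(1) = a$ factors uniquely as $f = \bar{f} \circ q$ with $\bar f : S^1 \to \R^2$ continuous and $\bar f(1) = a$. The induced map $\Phi : f \mapsto \bar f$ is a homeomorphism from the subspace of based loops in $\mathcal{C}(\I, \R^2)$ onto the subspace of based continuous maps in $\mathcal{C}(S^1, \R^2)$, both equipped with sup-norm topology, and is therefore a Borel isomorphism. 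The desired measure on $\mathcal{C}(S^1, \R^2)$ is the pushforward $\Phi_* \mu$. The isometry condition transfers without loss because $q$ is a local isometry off the single identified point, which has Lebesgue measure zero in $S^1$: if $\|\dot f(t)\| = 1$ on $\I \setminus D$ then $\|\dot{\bar f}(s)\| = 1$ for almost every $s \in S^1$.

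If one prefers to read $S^1$ literally as the unit circle of circumference $2\pi$, the only modification required is to rescale the target sequence; replacing the condition $c_n \to 1$ in Lemmas~\ref{random walk} and \ref{convergence} by $c_n \to 4\pi^2$ leaves the summability $\sum \sqrt{c_n - c_{n-1}} < \infty$ and every subsequent convergence estimate unchanged up to constants, so the entire construction transposes verbatim. Alternatively, Proposition~\ref{reduction}, extended in the obvious way to the circle case as indicated by its last clause, provides a constant-speed reparametrization between the two conventions.

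I do not anticipate any substantive obstacle, because all the analytic work—Cauchy convergence of the random walk, measurability of the limit, the isometric norm bound, and the endpoint condition—already resides in Theorem~\ref{main}. The only point requiring explicit verification is the bookkeeping step that $\Phi$ is a Borel isomorphism preserving the almost-everywhere isometry condition; this is routine once both spaces carry the sup-norm topology, and is the nearest thing to a technical obstacle in the argument.
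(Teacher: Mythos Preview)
Your proposal is correct and matches the paper's approach: the paper offers no explicit proof at all, merely declaring the corollary ``a direct consequence'' of Theorem~\ref{main}, and your specialization $b=a$ (so $\|b-a\|=0<1$) together with the routine identification of based loops in $\mathcal{C}(\I,\R^2)$ with maps $S^1\to\R^2$ is exactly the intended deduction. The extra care you take over the Borel isomorphism $\Phi$ and the circumference normalization goes beyond what the paper bothers to record, but is entirely in the same spirit.
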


Under certain restrictions on the sequence $\{c_k\}$ it is possible to describe the distribution of $\dot{f}(t,\omega)$ in a satisfactory way, as can be seen from the proposition below. We first prove a lemma.

\begin{lem} Let $\theta(t,\{d_{kj}\})$ denote the sum of the series
\begin{center}$\sum_{k=1}^{\infty}\sum_{j=1}^{2^{k-1}} r_k(t) 1_{I_{kj}}(t) d_{kj}\sin^{-1}\sqrt{\frac{c_k-c_{k-1}}{c_k}}$.\end{center} If $\sin^{-1}\sqrt{\frac{c_k-c_{k-1}}{c_k}}=\frac{1}{2^k}$ then for any $t\in\I$, the distribution of $\theta(t,\omega)$ is uniform on $[-1,1]$, where $\omega=\{d_{kj}\}_{j=1,k=1}^{2^{k-1},\infty}$. In particular, for $r,s\in [-1,1]$ with $r<s$
$$P\left\{\omega\in\Omega:\theta(t,\omega)\in(r,s)\right\}=(s-r)/2.$$
\end{lem}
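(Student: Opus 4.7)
The plan is to reduce $\theta(t,\omega)$ to a classical binary-expansion representation of a uniform random variable. Fix $t\in\I$. For each $k\geq 1$, the intervals $I_{kj}$ partition $\I$, so there is a unique index $j_k(t)\in\{1,\ldots,2^{k-1}\}$ with $t\in I_{k,j_k(t)}$. Using the hypothesis $\sin^{-1}\sqrt{(c_k-c_{k-1})/c_k}=1/2^k$, the series for $\theta(t,\omega)$ collapses to
$$\theta(t,\omega)=\sum_{k=1}^{\infty}\frac{r_k(t)\,d_{k,j_k(t)}}{2^k}.$$

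Next I would set $\varepsilon_k:=r_k(t)\,d_{k,j_k(t)}$ and check two things. First, the sign $r_k(t)\in\{-1,+1\}$ is deterministic (it depends only on $t$), so $\varepsilon_k$ and $d_{k,j_k(t)}$ have the same distribution, namely $\pm1$ with probability $1/2$ each. Second, as $k$ varies, the indices $(k,j_k(t))$ are all distinct, so the variables $d_{k,j_k(t)}$ are drawn from distinct coordinates of the product space $\Omega$; hence $\{\varepsilon_k\}_{k\geq1}$ is a sequence of i.i.d.\ symmetric Bernoulli random variables.

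The problem is now reduced to showing that $\sum_{k\geq 1}\varepsilon_k/2^k$ is uniform on $[-1,1]$. For this I would make the standard substitution $\eta_k=(1+\varepsilon_k)/2\in\{0,1\}$, which gives
$$\theta(t,\omega)=2\sum_{k=1}^{\infty}\frac{\eta_k}{2^k}-1.$$
The $\eta_k$ are i.i.d.\ Bernoulli$(1/2)$, so $U:=\sum_{k\geq 1}\eta_k/2^k$ is the random real number whose binary digits are i.i.d.\ fair coin flips; this is the classical construction showing $U\sim\text{Uniform}[0,1]$. (The countable set of dyadic rationals, where the binary expansion is not unique, has Lebesgue measure zero and can be ignored.) Consequently $\theta(t,\omega)=2U-1$ is uniform on $[-1,1]$, and for $r<s$ in $[-1,1]$,
$$P\{\omega:\theta(t,\omega)\in(r,s)\}=P\left\{U\in\left(\tfrac{r+1}{2},\tfrac{s+1}{2}\right)\right\}=\frac{s-r}{2}.$$

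The only subtle point—and the one I would pause on—is the joint independence of the $\varepsilon_k$'s. The concern would be that the deterministic $r_k(t)$ factors somehow correlate the summands; but since these factors are non-random and the $d_{k,j_k(t)}$ lie in distinct factors of the product probability space, independence is preserved. The rest is bookkeeping plus the classical uniform-from-binary-bits fact.
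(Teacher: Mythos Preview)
Your proof is correct and follows essentially the same route as the paper: fix $t$, use that exactly one $j$ contributes at each level $k$, absorb the deterministic signs $r_k(t)$ into the i.i.d.\ $\pm1$ coin flips, and reduce to the classical fact that $\sum_{k\geq1}\varepsilon_k/2^k$ is uniform on $[-1,1]$. The only difference is cosmetic: the paper cites Kac for that last step, whereas you spell it out via the substitution $\eta_k=(1+\varepsilon_k)/2$ and the binary-expansion construction of the uniform law.
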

\begin{proof}For a fixed $t$, $\{r_k(t)\}$ is a fixed sequence of $+1$ and $-1$. Hence it is enough to consider the distribution of the sum \begin{center}$\sum_{k=1}^{\infty}\sum_{j=1}^{2^{k-1}} 1_{I_{kj}}(t) d_{kj}\sin^{-1}\sqrt{\frac{c_k-c_{k-1}}{c_k}}$\end{center}
in order to obtain the distribution of $\theta(t,\{d_{kj}\})$. Moreover, for each $k$ there exists exactly one $j$ for which $t\in I_{kj}$. Therefore the distribution of $\theta(t,\{d_{kj}\})$ for a fixed $t$ is the same as the distribution of the series $\sum_{k=1}^{\infty} d_{k}\sin^{-1}\sqrt{\frac{c_k-c_{k-1}}{c_k}}$, where $\{d_k\}$ is a collection of iid random variables taking values $1$ and $-1$ with equal probability defined on a product space $\Pi_{k=1}^\infty\{-1,1\}$. It is a standard result of probability theory that the distribution of $\sum_{k=1}^{\infty} d_{k}\sin^{-1}\sqrt{\frac{c_k-c_{k-1}}{c_k}}$ is uniform on the interval $[-1,1]$ and is the same as the distribution of the Lebesgue measure scaled by the factor $1/2$ (see \cite{kac}).\end{proof}

\begin{rem} {\em Under the restrictions $\sin^{-1}\sqrt{\frac{c_k-c_{k-1}}{c_k}}=\frac{1}{2^k}$, for $k\geq 1$, the random function $\theta(t,\omega)$ has the Markov property; that is, if $0\leq t_0<t_1<\dots<t_{n-1}<t_n\leq 1$ then the distribution of $\{\theta(t_n,\omega)|\theta(t_0)=x_0,\dots,\theta(t_{n-1})=x_{n-1}\}$ is the same as the distribution of $\{\theta(t_n,\omega)|\theta(t_{n-1})=x_{n-1}\}$. Observe that, if $t_1$ and $t_2(>t_1)$ are two real numbers in $[0,1]$ then there exists a unique postive integer $l$ such that $r_i(t_1)=r_i(t_2)$ for all $i\leq l$. Under the given restrictions on $\{c_k\}$,  $\theta(t_1,\{d_{kj}\})=x_1$ uniquely determines the sequence $\{d_{kj}\}$. Let us denote these values by $d_{kj}(t_1,x_1)$. Then, the distribution of $\{\theta(t_2,\omega)|\theta(t_1)=x_1\}$ is uniform in the interval $y+\left[-\frac{1}{2^{l+1}},\frac{1}{2^{l+1}}\right]$, where $y=\sum_{k=1}^{l}\sum_{j=1}^{2^{k-1}} \frac{1}{2^k}r_k(t_1)1_{I_{kj}}(t_1) d_{kj}(t_1,x_1)$.}\end{rem}

\begin{prop} If $\sin^{-1}\sqrt{\frac{c_k-c_{k-1}}{c_k}}=\frac{1}{2^k}$, then for each $t\in \I\setminus D$, the distribution of $\dot{f}(t,\omega)$ is uniform on the arc $C_0:z_0\exp i\theta$, $-1\leq\theta\leq 1$, where $z_0\in S^1$ . Moreover, if $C$ is a subarc of $C_0$ then
$$P\left\{\omega\in\Omega:\dot{f}(t,\omega)\in C\right\}=k\lambda(C)$$ for some constant $k$, where $\lambda$ is the Lebesgue measure on the circle. \end{prop}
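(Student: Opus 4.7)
The plan is to reduce the claim to the preceding lemma by writing $\dot{f}(t,\omega)$ as $z_0\exp(i\theta(t,\omega))$ for a suitable $z_0\in S^1$, and then pushing forward the uniform law of $\theta(t,\cdot)$ on $[-1,1]$ via a length-preserving parameterization of $C_0$.

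First I would observe that for $t\in\I\setminus D$, Theorem~\ref{alpha theorem} gives $\dot{f}(t,\omega)=\alpha(t,\omega)$, where $\alpha$ is the uniform limit (in $t$, by Lemma~\ref{convergence}) of the $\alpha_n$ of Lemma~\ref{random walk}. Passing to the limit in the explicit product formula for $\alpha_n$ and using $c_n\to 1$ yields
\[
\alpha(t,\omega)\;=\;z_0\,\exp\bigl(i\,\theta(t,\omega)\bigr),\qquad z_0\;:=\;\alpha_0/\|\alpha_0\|,
\]
where $z_0\in S^1$. In particular $\dot{f}(t,\omega)\in C_0$ for every $\omega$, and $C_0$ is precisely the image of $[-1,1]$ under the map $\theta\mapsto z_0 e^{i\theta}$.

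Next I would invoke the preceding lemma: under the hypothesis $\sin^{-1}\sqrt{(c_k-c_{k-1})/c_k}=2^{-k}$, the scalar $\theta(t,\omega)$ has the uniform distribution on $[-1,1]$ with density $1/2$. For any Borel subarc $C\subset C_0$, write $C=\{z_0 e^{i\theta}:\theta\in J\}$ for a unique Borel set $J\subset[-1,1]$. Since $\theta\mapsto z_0 e^{i\theta}$ is an isometric embedding of $[-1,1]$ (with Lebesgue measure) onto $C_0\subset S^1$ (with arc-length $\lambda$), we have $|J|=\lambda(C)$, and hence
\[
P\{\omega:\dot{f}(t,\omega)\in C\}\;=\;P\{\omega:\theta(t,\omega)\in J\}\;=\;\tfrac{1}{2}\,|J|\;=\;\tfrac{1}{2}\,\lambda(C),
\]
which is the asserted uniformity with $k=1/2$.

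The only substantive step is the identification $\alpha(t,\omega)=z_0\exp(i\theta(t,\omega))$ at non-dyadic $t$, which amounts to taking the limit in the product formula of Lemma~\ref{random walk} using the uniform convergence of the exponent and the convergence $c_n\to 1$; no serious obstacle is expected, since the distributional input has already been supplied by the preceding lemma, and the remainder is just a change of variables from $[-1,1]$ to the arc $C_0$.
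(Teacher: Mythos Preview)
Your proof is correct and follows the same route as the paper's: the paper's argument is simply ``the proof follows from the lemma above, with the observation that $\dot{f}(\,\cdot\,,\omega)$ exists and is equal to $\alpha(t,\omega)$ for each $t\in\I\setminus D$,'' and you have merely made explicit the identification $\alpha(t,\omega)=z_0\exp(i\theta(t,\omega))$ and the change-of-variables computation (including the value $k=1/2$) that the paper leaves to the reader.
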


\begin{proof} The proof follows from the lemma above, with the observation that $\dot{f}(\ ,\omega)$ exists and is equal to $\alpha(t,\omega)$ for each $t\in\I\setminus D$ and $\omega\in\Omega$. \end{proof}

{\em Acknowldegements}: The second author would like to thank Misha Gromov for his comments on this problem during a discussion in IHES.



\begin{thebibliography}{99}
\bibitem{berger} M. Berger: Encounter with a Geometer, Part I, \emph{Notices of the Amer. Math. Soc.},\textbf{47} (2000), 183-194.
\bibitem{billingsley} P. Billingsley: \textit{Convergence of probability measures}. Wiley Series in Probability and Statistics. John Wiley and Sons, Inc., New York, 1999.
\bibitem{eliash} Y. Eliashberg and N. Mishachev: {\em Introduction to the $h$-Principle\/} Graduate Studies in
Mathematics, \textbf{48}, American Mathematical Society, 2002.
\bibitem{gromov} M. Gromov: \textit{Partial Differential Relations}, Ergebnisse der Mathematik und ihrer Grenzgebiete 3. Folge  Band 9. Springer-Verlag, 1986.
\bibitem{kac} Kac, M: \textit{Statistical independence in probability, analysis and number theory}. The Carus Mathematical Monographs, \textbf{12}, John Wiley and Sons, Inc., New York, 1959.
\bibitem{karatzas}I. Karatzas, S.E. Shreve: \textit{Brownian motion and Stochastic Calculus}, 2nd Ed., Springer Verlag  1991.
\bibitem{nash}J. Nash: $C^1$-isometric embeddings, \textit{Annals of Math}. {\bf 60}(1954) 383--396.
\end{thebibliography}
\end{document}